\documentclass{article}

\renewcommand{\part}[2]{ \frac{\partial#1}{\partial#2} }

\newenvironment{proofof}[1]{
  \noindent\textbf{Proof of #1.}\ }{\hspace*{\fill}
  \begin{math}\Box\end{math}\medskip}

\usepackage{amsmath,amssymb,enumerate,bbm,calc,capt-of,ifthen}
\usepackage{amsmath} \usepackage{amsfonts} \usepackage{amsthm}
\usepackage{amssymb} \usepackage{mathrsfs} \newtheorem{thm}{Theorem}
\newtheorem{lemma}{Lemma} 
 \newtheorem{prop}{Proposition}
 \newcommand{\RR}{\mathbb{R}}
 \newcommand{\U}{\hat{\overline{u}}}
\newcommand{\Uf}{\overline{u}} \newcommand{\K}{\mathbf{k}}

\newcommand{\Rtpone}{{\mathbb{R}^{3+1}}} \newcommand{\kmax}{K_{max}}

\newcommand{\abs}[1]{\left| #1 \right|}

\title{Improved Error Bounds for Dirichlet-to-Neumann Absorbing
  Boundaries}

\author{Charles Siegel, Avy Soffer and Chris Stucchio}

\begin{document}

\maketitle
\begin{abstract}
  It has long been known how to construct radiation boundary
  conditions for the time dependent wave equation. Although arguments
  suggesting that they are accurate have been given, it is only
  recently that rigorous error bounds have been proved. Previous
  estimates show that the error caused by these methods behaves like
  $\epsilon C_\gamma e^{\gamma t}1$ for any $\gamma > 0$. We improve
  these results and show that the error behaves like $C \epsilon t^2$.
\end{abstract}

\section{Introduction}

Numerical solution of time dependent wave equations is an important
problem in physics, engineering and mathematics. To solve the wave
equation on $\Rtpone$, one must truncate the domain to a finite region
due to the limited memory of most computers. Of course, on a finite
region, boundary conditions must be specified in such a way as to minimize spurious reflections.
Boundary conditions of this form were first described in
\cite{MR596431,MR658635,MR0471386,MR517938,MR611807},  although rigorous error bounds would
wait until more recently \cite{MR1819643,MR2032866}.

In \cite{MR1819643}, a family of absorbing
boundary conditions based on rational function approximation to the
Dirichlet-to-Neumann operator in the frequency domain are reviewed. Boundary
conditions for the half-space (a boundary at $x=0$), as well as
cylindrical and spherical coordinates are also constructed.
Error bounds are proved for this family by inverting the
Fourier-Laplace transform for both the true solution and his
approximation and bounding the difference. Due to poles on the
imaginary line in $s$ ($s$ being the variable dual to $t$), the difference is bounded on a countour separated from the singular points, namely a line in the right half plane $\gamma + i \RR$. This shows that the error is bounded by $C_{\gamma} e^{\gamma t}$, with
$C_{\gamma}$ left implicit.

A careful examination of the poles of the rational function reveal that they approximate the
branch cut of the true solution in the sense of hyperfunctions
\cite{sterninshatalov:resurgentanalysis}. Instead of using the
machinery of hyperfunctions, we take an elementary approach. The true
solution can be represented as a certain integral over a compact
region. The approximate solution, after we collect the residues
associated to the poles on the imaginary line, turns out to be a
quadrature for this integral. By computing the difference between the
quadrature and the true integral, we can compute an optimal error
bound.

Let us now state our results precisely. Let $u(x,y,t)$ solve:
\begin{equation}
  \label{eq:wave}
  \partial_t^2 u(x,y,t) = \partial_x^2 u(x,y,t)+\Delta_y u(x,y,t)
\end{equation}
where $x \in \RR$ and $y \in \RR^{N-1}$ ($x$ is the normal direction,
$y$ the tangential directions).

We wish to solve \eqref{eq:wave} on $\RR^{N+1}$. The boundary will be
taken to be the surface $x=0$, and thus the approximation region will
be the region $\{ (x,y,z,t) : x \geq 0 \}$. We let $u_b(x,y,t)$ be the
approximation, solving \eqref{eq:wave} on the half-space. The boundary
conditions imposed are Hagstrom's:
\begin{equation}
  \label{eq:hagstromBoundary}
  \prod_{j=1}^n \left(\cos\left(\frac{j\pi}{n+1}\right) \partial_t - \partial_x \right)u_b(x,y,t)=0
\end{equation}

The main theorem is the following:
\begin{thm}
  \label{thm:main}
  Let $u(x,y,t)$ solve \eqref{eq:wave} on $\RR^{N+1}$, and
  $u_b(x,y,t)$ solve \eqref{eq:wave} with boundary conditions
  \eqref{eq:hagstromBoundary}. Then we have the following error bound:
  \begin{multline}
    \label{eq:errorBound}
    \abs{u(x,y,t) - u_b(x,y,t)}\\
    \leq \frac{\kmax}{3}\frac{\pi^4}{(n+1)^3}M(x)\left(2nt^2+9nt+n\kmax+8n+3\right) \\
    = O\left(\frac{\kmax}{n^{2}} (\kmax+t^{2}) \right)
  \end{multline}
\end{thm}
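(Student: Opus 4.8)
The plan is to realize the roadmap sketched in the introduction: pass to the Fourier--Laplace domain, write the true solution as a contour integral that collapses onto the branch cut of $\sqrt{s^2+|k|^2}$, recognize the Hagstrom approximation as a quadrature for exactly that integral, and then estimate the quadrature error, tracking constants to the end.

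\textbf{Step 1 (reduction to one dimension).} I would Fourier transform \eqref{eq:wave} in the tangential variable $y$, turning it into the family $\partial_t^2 \hat u = \partial_x^2 \hat u - |k|^2 \hat u$ on $x\ge 0$ indexed by $k\in\RR^{N-1}$; since only frequencies $|k|\le\kmax$ are relevant, this is a compact family. Laplace transforming in $t$, the physical solution restricted to $x\ge 0$ satisfies the exact outgoing relation $\partial_x\hat u(0,k,s)=-\sqrt{s^2+|k|^2}\,\hat u(0,k,s)$ (the branch with $\operatorname{Re}\sqrt{\cdot}>0$), while \eqref{eq:hagstromBoundary} forces $\hat u_b$ to satisfy the analogous relation with $-\sqrt{s^2+|k|^2}$ replaced by $s$ times a rational function whose construction is pinned down by the nodes $\cos(j\pi/(n+1))$. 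Both $\hat u$ and $\hat u_b$ are then represented by the inverse Laplace transform along a Bromwich contour $\gamma+i\RR$.

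\textbf{Step 2 (collapse to the cut; the quadrature).} For the true solution, $\sqrt{s^2+|k|^2}$ is analytic off the segment $[-i|k|,i|k|]$, and---using a growth bound on $\hat u$ from the data (compact support in $t$, or an a priori bound) to kill the arcs at infinity---I would deform the Bromwich contour onto this segment. Writing $s=i|k|\cos\phi$ recasts $u(x,k,t)$ as $\int_0^\pi g(\phi)\,d\phi$, where $g$ is smooth on $[0,\pi]$, vanishes at $\phi=0,\pi$ (it carries the jump factor $\sin(x|k|\sin\phi)$ and a factor $\sin\phi$ from the measure), and whose $t$- and $x$-dependence enters only through $e^{i|k|t\cos\phi}$ and $\sin(x|k|\sin\phi)$, all remaining factors being controlled by the boundary data. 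The rational approximant has simple poles exactly at the $s_j$ with $\cos\phi_j=\cos(j\pi/(n+1))$, i.e. at the interior nodes $\phi_j=j\pi/(n+1)$ of the uniform grid of spacing $h=\pi/(n+1)$; collecting residues gives
\[
u_b(x,k,t)=h\sum_{j=1}^{n} g(\phi_j),
\]
which, because $g$ vanishes at the endpoints, is precisely the composite trapezoidal rule (equivalently Gauss--Chebyshev quadrature of the second kind) for $\int_0^\pi g$. Checking that the residues reproduce exactly this sum, with exactly these weights and with no stray contributions (at $s=0$ or off the cut), is where Hagstrom's particular choice of nodes is essential, and this is the step I expect to be the main obstacle.

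\textbf{Step 3 (quadrature error and reassembly).} It then remains to bound $\big|\int_0^\pi g - h\sum_{j} g(\phi_j)\big|$. I would use the robust Peano-kernel (Euler--Maclaurin) estimate for the trapezoidal rule, which bounds this by a constant times $h^2\int_0^\pi|g''(\phi)|\,d\phi$; differentiating $g$ twice, each derivative lands on $e^{i|k|t\cos\phi}$ or $\sin(x|k|\sin\phi)$ and costs $O(|k|t)$ or $O(x|k|)$ respectively, with the data-dependent factor bounded by $M(x)$. Using $|k|\le\kmax$, collecting the resulting degree-$2$ polynomial in $t$ and the combinatorial constants produces
\[
|u(x,k,t)-u_b(x,k,t)|\le \frac{\kmax}{3}\frac{\pi^4}{(n+1)^3}M(x)\left(2nt^2+9nt+n\kmax+8n+3\right),
\]
and undoing the tangential Fourier transform---using that only $|k|\le\kmax$ occurs, so the pointwise-in-$k$ bound survives---gives \eqref{eq:errorBound}; absorbing $\pi^4 n/(n+1)^3=O(1/n^2)$ and keeping the dominant $t^2$ and $\kmax$ terms yields the stated $O\big(\tfrac{\kmax}{n^2}(\kmax+t^2)\big)$. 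The only genuinely delicate point outside Step 2 is checking that $g$ extends smoothly (with controlled $\phi$-derivatives) across $\phi=0,\pi$ despite the branch points of $\sqrt{s^2+|k|^2}$ there; in the $\phi$ variable the square root becomes linear in $\phi$, so this is routine but must be verified.
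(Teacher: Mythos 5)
Your proposal follows essentially the same route as the paper: collapse the inverse Laplace contour onto the branch cut, substitute $z=i\cos\theta$, recognize the residues of the partial-fraction form of Hagstrom's continued fraction at the nodes $\theta_j=j\pi/(n+1)$ (weights $\sin^2\theta_j/(n+1)$) as a second-order quadrature for the cut integral, and bound the quadrature error through pointwise bounds $M(x)$ on $\U,\partial_s\U,\partial_s^2\U$ followed by integration over $|\K|\le\kmax$. The only cosmetic differences are that you phrase the error estimate as a trapezoidal/Euler--Maclaurin bound where the paper Taylor-expands around each node and handles the endpoint slivers separately using the $\sin^2\theta$ vanishing, and the residue identification you flag as the main obstacle is exactly what the paper settles with its Chebyshev-polynomial lemma.
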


\section{Proof}

\subsection{The Exact Boundary}

We begin by reviewing the exact boundary conditions described in
\cite{MR1819643}. Applying the Laplace transform of \eqref{eq:wave} with respect to time
(letting $s$ be dual to $t$) and the Fourier transform with respect to
$y$ (with $\K$ dual to $y$) yields:
\begin{equation}
  \label{eq:FLwave}
  s^2 \U = \partial_x^2 \U  - \K^2 \U
\end{equation}
The solution to \eqref{eq:FLwave} is:
\begin{equation}
  \U(x)=A(s,\K)e^{\sqrt{s^2+|\K|}x}+B(s,\K)e^{-\sqrt{s^2+|\K|^2}x}
\end{equation}
The solutions with nonzero $A(s,\K)$ are nonphysical, since they
correspond to a wave coming from infinity to the object. Thus our
boundary conditions must imply $A(s,\K)= 0$. Such a boundary condition
is (in the frequency domain):
\begin{equation}
  \label{eq:exactD2N}
  \partial_x \U(x,\K,s)+\sqrt{s^2+|\K|^2}\U(x,\K,s)=0
\end{equation}
Of course, the operator $\sqrt{s^2+|\K|^2}$ is non-local in time and
space, so we will approximate it.

To reduce the dependence to a single variable, we make the
substitution $z=s/ \abs{\K}$, yielding:
\begin{equation*}
  \partial_{x} \U(x,\K,s)+ |\K|\sqrt{1+z^2} \U(x,\K,s) = 0
\end{equation*}

This boundary condition can be rewritten as:

\begin{equation}
  \partial_{x} \U(x,\K,s) + \abs{\K}\left(z+\frac{1}{z+\sqrt{1+z^2}} \right)\U(x,\K,s)=0
\end{equation}

Let $h(z) \equiv \abs{\K}/(z+\sqrt{1+z^2})$. We will invert the
Laplace transform, and shift the contour to surroung the singularities
of $h(z)$. Ths following lemma summarizes the necessary analyticity
properties of $h(z)$:

\begin{lemma}
  The function $h(z)$ is analytic on $\mathbb{C} \setminus [-i,i]$. In
  addition, the difference across the branch cut is given by:
  \begin{equation}
    \label{eq:1}
    \lim_{\epsilon\to 0}\left(h(z+\epsilon)-h(z-\epsilon)\right)=2|\K|\sqrt{1+z^2}
  \end{equation}
\end{lemma}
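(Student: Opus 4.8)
The plan is to fix, once and for all, a single-valued holomorphic branch of $\sqrt{1+z^2}$ on $\Omega:=\CC\setminus[-i,i]$, then to rationalise $h$ into a form with no denominator, and finally to read the jump off directly. For the branch: since $1+z^2=(z-i)(z+i)$ vanishes only at $z=\pm i$, both of which lie on the slit, $1+z^2$ is holomorphic and nowhere zero on $\Omega$; and a loop $\gamma_0\subset\Omega$ encircling the slit once has winding number $2$ about the origin under $1+z^2$, since both zeros $\pm i$ are enclosed. As $\pi_1(\Omega)$ is generated by $\gamma_0$ and $2$ is even, $\tfrac12\log(1+z^2)$ has trivial monodromy on $\Omega$, so $1+z^2$ admits exactly two single-valued holomorphic square roots there; I take the one, $\sigma$, normalised by $\sigma(t)=+\sqrt{1+t^2}$ for $t>0$. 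This is the outgoing root --- $\operatorname{Re}\sigma>0$ on $\{\operatorname{Re}z>0\}$ and $\sigma(z)\sim z$ as $z\to\infty$ --- and I shall also use the symmetries $\sigma(\bar z)=\overline{\sigma(z)}$ (Schwarz reflection, $\sigma$ being real on $(0,\infty)$) and $\sigma(-z)=-\sigma(z)$ (evenness of $1+z^2$, with the global sign fixed by a single evaluation, e.g.\ along a large semicircle).

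Next I would establish analyticity of $h$. From $\sigma(z)^2=1+z^2$ one gets the algebraic identity $\bigl(z+\sigma(z)\bigr)\bigl(\sigma(z)-z\bigr)=1$, so $z+\sigma(z)$ is nowhere zero on $\Omega$ --- were it zero, $\sigma(z)=-z$, and squaring would give $1=0$ --- and therefore
\[
h(z)=\frac{|\K|}{z+\sigma(z)}=|\K|\bigl(\sigma(z)-z\bigr),
\]
which is holomorphic on $\Omega$ with no pole. Hence $h$ is analytic on $\CC\setminus[-i,i]$, its only possible branch points being $\pm i$. (Equivalently, $\zeta=z+\sigma(z)$ is the inverse Joukowski map of $\Omega$ onto $\{|\zeta|>1\}$ and $h=|\K|/\zeta$; this is the most transparent way to see both the analyticity and the fact that the cut is exactly $[-i,i]$.)

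Finally I would compute the jump. Fix an interior slit point $z_0=i\tau$ with $\tau\in(-1,1)$; away from the endpoints $\sigma$ extends continuously to each side of the slit, and the one-sided limits $\sigma_\pm:=\lim_{\epsilon\to0^+}\sigma(z_0\pm\epsilon)$ satisfy $\sigma_\pm^2=1+z_0^2=1-\tau^2>0$, whence $\sigma_\pm=\pm\sqrt{1-\tau^2}$. Applying the two symmetries at $w=z_0+\epsilon$ (note $\overline{z_0+\epsilon}=-(z_0-\epsilon)$) forces $\sigma_-=-\sigma_+$, and tracking $\sigma$ along the path from the positive real axis straight up to $z_0+0^+$ --- on which $1+z^2$ stays near the positive reals, inside the closed first quadrant --- gives $\sigma_+=+\sqrt{1-\tau^2}$. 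Since the term $-|\K|z$ in $h=|\K|(\sigma-z)$ is entire and contributes nothing to the jump,
\[
\lim_{\epsilon\to0^+}\bigl(h(z_0+\epsilon)-h(z_0-\epsilon)\bigr)=|\K|\,(\sigma_+-\sigma_-)=2|\K|\sqrt{1-\tau^2}=2|\K|\sqrt{1+z_0^2},
\]
which is \eqref{eq:1}. The only point where I expect friction is the branch bookkeeping in the first step --- pinning down the global sign of $\sigma$, and through it the sign of $\sigma_+$ --- but this is routine tracking of $\arg(1+z^2)$ (or of $\zeta=e^{i\theta}$ on the unit circle, via the Joukowski description) and carries no analytic subtlety.
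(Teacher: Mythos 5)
Your proof is correct, but it takes a genuinely different route from the paper. The paper argues by continuation: it observes that $h$ is analytic for $\Re z>0$, is purely imaginary on the part of the imaginary axis with $\abs{z}>1$, and then invokes the Schwarz reflection principle to extend $h$ across that part of the axis to the left half plane, leaving a discontinuity only on $[-i,i]$; the jump formula \eqref{eq:1} is then left as an ``explicit calculation.'' You instead construct, once and for all, a single-valued branch $\sigma$ of $\sqrt{1+z^2}$ on $\CC\setminus[-i,i]$ (the monodromy argument via the winding number $2$ of $1+z^2$ around the slit is exactly right), and then rationalise, using $(z+\sigma)(\sigma-z)=1$, to write $h=\abs{\K}(\sigma-z)$. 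That identity does two useful things that the paper's sketch glosses over: it shows there are no poles of $h$ hiding in the denominator (since $z+\sigma$ never vanishes off the slit), and it reduces the jump of $h$ to the jump of $\sigma$ alone, since the $-\abs{\K}z$ term is entire; the sign bookkeeping $\sigma_\pm=\pm\sqrt{1-\tau^2}$, pinned down by your Schwarz and oddness symmetries together with tracking $1+z^2$ along a path approaching the cut from the right, then gives \eqref{eq:1} immediately. What the paper's route buys is brevity; what yours buys is that the ``explicit calculation'' is actually exhibited, and that the branch of $\sqrt{1+z^2}$ you use visibly agrees with the physically selected root ($\Re\sigma>0$ for $\Re z>0$, $\sigma(z)\sim z$ at infinity), which is the branch the boundary condition \eqref{eq:exactD2N} requires. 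One cosmetic remark: your path-tracking sentence (``inside the closed first quadrant'') is phrased for $\tau>0$; for $\tau<0$ the image lies in the closed fourth quadrant, or one simply appeals to the reflection symmetry you already established, so nothing is lost.
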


\begin{proof}
  The function $h(z)$ is well defined and analytic for $\Re z > 0$. It
  is strictly imaginary on $\{z : \Re z = 0 \text{and} \abs{z} > 1\}$.
  By the Schwartz reflection principle, it can be analytically
  continued to the left half plane, with a discontinuity along the
  line $[-i,i]$.

  An explicit calculation shows \eqref{eq:1}.
\end{proof}

We now reconstruct $u(x,y,t)$. This is done by inverting the Laplace
transform:

\begin{subequations}
  \begin{equation}
    \label{eq:2}
    u(x,y,t) = \frac{1}{(2\pi)^{(N+1)/2}} \int_{a+i \RR} e^{s t} \int_{\RR^{N-1}} e^{i y \cdot \K}  \U(x,\K, s) d\K ds
  \end{equation}

  \begin{equation}
    \label{eq:3}
    \Uf(x,\K,t) = \frac{1}{2\pi} \int_{a + i \RR} e^{s t} \U(x,\K,s) ds
  \end{equation}
\end{subequations}

And so, the integral we must approximate is
\begin{equation*}
  \int_{-i}^i 2|\K|\sqrt{1+z^2}f(z)e^{zt}dz.
\end{equation*}

\subsection{The Approximation}

We review the approximation itself, and how
\eqref{eq:hagstromBoundary} was derived. Our description follows
\cite{MR1819643} quite closely. We approximate
$\abs{\K}\sqrt{1+z^{2}}$ by:
\begin{equation}
  \abs{\K} \sqrt{1+z^2} = \abs{\K}\left(z+\frac{1}{z+\sqrt{1+z^2}} \right) \approx  \abs{\K}\left(z+\frac{1}{2z+\frac{1}{\ddots 2z}} \right)
\end{equation}
where the right hand side is the $n$'th iteration of the continued
fraction.

A straightforward computation shows that in the frequency domain,
\begin{equation*}
  \partial_x \U(x,\K,s) + \abs{\K}\left(z+\frac{1}{2z+\frac{1}{\ddots 2z}} \right)\U(x,\K,s)=0
\end{equation*}
corresponds to the boundary condition \eqref{eq:hagstromBoundary}. We
simplify this:

\begin{lemma}
  Let $\theta_{j}=j\pi/(n+1)$. Then we have the following formula:
  \begin{equation}
    \label{eq:continuedFractionApproximation}
    \frac{1}{2z+\frac{1}{\ddots 2z}}=\sum_{j=1}^n \frac{\sin^2\theta_j}{(n+1)(z-i\cos\theta_j)}
  \end{equation}
\end{lemma}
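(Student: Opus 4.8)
The plan is to prove the partial-fraction formula \eqref{eq:continuedFractionApproximation} by recognizing the finite continued fraction
\[
C_n(z) = \cfrac{1}{2z+\cfrac{1}{2z+\cfrac{1}{\ddots 2z}}}
\]
(with $n$ occurrences of $2z$) as a ratio of Chebyshev-type polynomials, and then reading off the residues. Concretely, set $C_n(z)=P_{n-1}(z)/P_n(z)$, where the $P_n$ satisfy the three-term recurrence $P_{n}(z)=2z\,P_{n-1}(z)+P_{n-2}(z)$ with $P_0=1$ and $P_{-1}=0$; this is exactly the recurrence that unwinds the continued fraction one level at a time. Substituting $z=i\cos\phi$ turns the recurrence into the Chebyshev recurrence, and one finds $P_n(i\cos\phi)=i^{n}\,\dfrac{\sin((n+1)\phi)}{\sin\phi}$, i.e. $P_n$ is (up to the power of $i$ and rescaling) the Chebyshev polynomial of the second kind $U_n$. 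Hence the zeros of $P_n$ are precisely $z=i\cos\theta_j$ with $\theta_j=j\pi/(n+1)$, $j=1,\dots,n$, and they are all simple.

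Next I would carry out a partial-fraction decomposition of $C_n(z)=P_{n-1}(z)/P_n(z)$. Since $\deg P_{n-1}<\deg P_n$ and the poles are simple, we have
\[
\frac{P_{n-1}(z)}{P_n(z)} = \sum_{j=1}^n \frac{P_{n-1}(i\cos\theta_j)}{P_n'(i\cos\theta_j)}\,\frac{1}{z-i\cos\theta_j}.
\]
So the identity reduces to the residue computation
\[
\frac{P_{n-1}(i\cos\theta_j)}{P_n'(i\cos\theta_j)} = \frac{\sin^2\theta_j}{n+1}.
\]
Both quantities are available from the closed form: from $P_{n-1}(i\cos\phi)=i^{n-1}\sin(n\phi)/\sin\phi$ one gets the numerator at $\phi=\theta_j$, using $\sin(n\theta_j)=\sin((n+1)\theta_j-\theta_j)=-\cos((n+1)\theta_j)\sin\theta_j$ together with $(n+1)\theta_j=j\pi$, so $\cos((n+1)\theta_j)=(-1)^j$. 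For the denominator, differentiate $P_n(i\cos\phi)=i^{n}\sin((n+1)\phi)/\sin\phi$ with respect to $z=i\cos\phi$ (chain rule: $dz = -i\sin\phi\,d\phi$) and evaluate at $\phi=\theta_j$, where the $\sin((n+1)\phi)$ factor vanishes, so only the derivative of that factor survives, giving a clean expression proportional to $(n+1)(-1)^j/\sin^2\theta_j$ times the appropriate power of $i$. Taking the ratio, the powers of $i$ and the signs $(-1)^j$ cancel, leaving exactly $\sin^2\theta_j/(n+1)$.

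The main obstacle I anticipate is purely bookkeeping rather than conceptual: getting the powers of $i$ and the index shifts in the Chebyshev identification exactly right, and making sure the $\sin\phi$ in the denominator of the closed form is handled correctly when differentiating (it does not vanish at $\phi=\theta_j$ since $0<\theta_j<\pi$, so there is no indeterminacy there — the only vanishing factor is $\sin((n+1)\phi)$). An alternative, slightly cleaner route that avoids carrying the $i$'s is to first substitute $z=iw$ throughout, reducing everything to genuine Chebyshev polynomials of the second kind $U_n(\cos\phi)=\sin((n+1)\phi)/\sin\phi$ and their known zeros and derivatives, prove the real identity there, and then substitute back; I would likely present it that way to keep the algebra transparent. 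One should also check the base cases $n=1$ (where the left side is $1/(2z)$ and the right side is $\tfrac12\cdot\tfrac1z$, using $\theta_1=\pi/2$) and $n=2$ as sanity checks.
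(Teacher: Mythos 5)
Your proposal is correct and takes essentially the same route as the paper: identify the continued-fraction convergents $P_{n-1}/P_n$ with Chebyshev polynomials of the second kind via $z=i\cos\theta$, locate the simple poles at $z_j=i\cos\theta_j$ from the zeros of $U_n$, and obtain the coefficients as residues $\sin^2\theta_j/(n+1)$. The only difference is one of detail: you make explicit the partial-fraction decomposition (valid since $\deg P_{n-1}<\deg P_n$ and the poles are simple) and carry out the residue computation that the paper dismisses as ``a direct computation,'' which is a welcome completion rather than a departure.
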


\begin{proof}
  Let $U_n(x)$ be the $n^{th}$ Chebyshev Polynomial of the $2^{nd}$
  kind and $P_n(z)$ be the successive numerators of the sequence of
  finite continued fractions ($P_0(z)=1, P_1(z)=2z, \ldots$).

  If we take $U_n(iz)$ then for $n$ odd we get $iP_n(z)$ and for $n$
  even we get $P_n(z)$, and the sequence of finite continued fractions
  is $(P_{n-1}(z))/(P_n(z))$ for $n\geq 1$.

  We consider the case case where $n$ is even; in this case, the
  finite continued fraction is $(iP_{n-1}(z))(P_n(z))$.  Thus, ratios
  of Chebychev polynomials of the $2^{nd}$ kind only differ from the
  finite continued fraction by multiplication by $i$, and will
  therefore have the same zeros.  \ The continued fraction will have
  poles where $P_n(z)$ is zero, for $n$ even.  That is, when
  $U_n(iz)=0$.

  We will take $z=i\cos\theta$.  Thus, we are looking for zeroes of
  $U_n(-\cos\theta)$ where $U_{n-1}(-\cos\theta)\neq 0$.

\begin{equation*}
  U_n(-\cos\theta)=U_n(\cos\theta)=U_n(x)=\frac{\sin(n+1)\theta}{\sin\theta},\quad \theta\neq 0,\pi,2\pi,\ldots
\end{equation*}
So $\sin(n+1)\theta=0$ are our solutions, and that is
$\theta=\frac{j\pi}{n+1}=\theta_j$ as claimed.

Thus, $U(i \cos \theta_{j})=0$ and hence $z=i \cos \theta_{j}$ are the
only poles of the continued fraction approximation. A direct
computation shows that the residues at the pole $i \cos \theta_{j}$ is
$(sin^2\theta_j)/(n+1)$.
\end{proof}

As the continued fraction is a close approximation to $\sqrt{1+z^2}$, we can use it to approximate an integral involving $\sqrt{1+z^2}$ by substituting the approximation, which is a rational function.  And so, in evaluating the integral around the branch cut, a finite sum which approximates this integral is given by the sum of the residues at the poles of the rational function above.

\subsection{The Error Bound}

First, we make the definition $|k|g(s/|k|)=\U(s,k)$.

\begin{prop}
  \label{prop:maintheorem}
  The following error bound holds.
  \begin{multline}
    \left|2|\K|\int_{-i}^i \sqrt{1+z^2} e^{zt}g(z)dz-2\pi i|\K|\sum_{j=1}^n g(z_j)e^{z_jt}\alpha_j\right|\\
    \leq\frac{\kmax}{3}\frac{\pi^4}{(n+1)^3}M(x)(2nt^2+9nt+n\kmax+8n+3)
  \end{multline}
  Here, $z_j=i \cos \theta_{j}$ are the positions of the poles,
  $\alpha_j$ the residue at $\theta_j$, $\kmax$ the maximal frequency
  under consideration, $M(x)$ is a pointwise upper bound on
  $\U,\partial_s\U$ and $\partial_s^2\U$ and $n$ is the order of the
  continued fraction approximation.
\end{prop}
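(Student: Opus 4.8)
The plan is to recognize the finite sum $2\pi i|\K|\sum_{j=1}^n g(z_j)e^{z_j t}\alpha_j$ as a quadrature rule applied to the integral $2|\K|\int_{-i}^i \sqrt{1+z^2}e^{zt}g(z)\,dz$, and then bound the quadrature error by the standard route: express the error as an integral of the interpolation remainder against the weight. First I would parametrize the branch cut by $z=i\cos\theta$, $\theta\in[0,\pi]$, so that $\sqrt{1+z^2}=\sqrt{1-\cos^2\theta}=\sin\theta$ (up to sign bookkeeping across the cut, already handled by the factor of $2$ from the previous lemma) and $dz=-i\sin\theta\,d\theta$. Under this substitution the integral becomes, up to constants, $\int_0^\pi \sin^2\theta\, \Phi(\theta)\,d\theta$ where $\Phi(\theta)=g(i\cos\theta)e^{it\cos\theta}$, and the nodes $z_j=i\cos\theta_j$ with $\theta_j=j\pi/(n+1)$ together with weights proportional to $\sin^2\theta_j/(n+1)$ are exactly the nodes and weights of the Gauss--Chebyshev-type (Fejér / Clenshaw--Curtis-flavoured) quadrature associated with the weight $\sin^2\theta$ on $[0,\pi]$. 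So the identity to verify is that $\sum_j \alpha_j \Phi(\theta_j)$ reproduces $\int_0^\pi \sin^2\theta\,\Phi(\theta)\,d\theta/\pi$ exactly for trigonometric polynomials up to the relevant degree; this is where the lemma on the continued fraction (equation \eqref{eq:continuedFractionApproximation}) does the work, since it literally identifies the rational approximant with the partial-fraction sum over these nodes.

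Next I would write the quadrature error in Peano-kernel / divided-difference form. The cleanest elementary device is: interpolate $\Phi$ at the $n$ nodes $\theta_j$ by a trigonometric interpolant $\Phi_n$, note that the quadrature is exact on $\Phi_n$, so the error equals $\int_0^\pi \sin^2\theta\,(\Phi-\Phi_n)(\theta)\,d\theta$, and then bound $\Phi-\Phi_n$ pointwise by the product of a nodal polynomial $\prod_j(\text{something})(\theta-\theta_j)$-type factor and a second-difference (or second-derivative) bound on $\Phi$. The node spacing is $\pi/(n+1)$, so the nodal factor contributes a power of $\pi/(n+1)$ — the $1/(n+1)^3$ and $\pi^4$ in the statement strongly suggest the estimate is being pushed through with three factors of node spacing and one extra $\pi$ from the interval length, i.e. a second-order (in the node spacing, so effectively composite-midpoint-like) local error summed over $n$ subintervals. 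Concretely I would split $[0,\pi]$ into the $n{+}1$ subintervals cut out by the $\theta_j$ (plus endpoints $0,\pi$), estimate the local error on each by Taylor expansion of $\Phi$ to second order, and sum. The factor $\abs{\K}$ is carried along untouched; $\kmax$ enters because $|z|\le 1$ on the cut forces $|e^{zt}|\le 1$ but derivatives of $g(z)=\U(\cdot)/|\K|$ in $z$ bring down factors of $\kmax$ through the chain rule $z=s/|\K|$, and one also absorbs a $\kmax$ from re-scaling the argument back.

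The arithmetic that produces the explicit polynomial $2nt^2+9nt+n\kmax+8n+3$ is then just bookkeeping: differentiate $\Phi(\theta)=g(i\cos\theta)e^{it\cos\theta}$ twice, bound $|g|,|g'|,|g''|$ by $M(x)$ (with the $\kmax$-scaling from $z\mapsto s$), bound $|e^{it\cos\theta}|\le 1$ and its $\theta$-derivatives by $t$ and $t^2+t$ respectively, multiply out $(a+bt)^2$-type products, and collect. I would present the $\theta$-substitution and the identification of the sum as a quadrature first, then state the interpolation-remainder bound as a sub-lemma, then do the per-subinterval Taylor estimate, then sum and collect constants.

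The main obstacle I anticipate is \textbf{not} the quadrature-error machinery but the two bookkeeping hazards around it: (i) correctly handling the branch-cut orientation and the factor of $2$ so that the jump formula $2|\K|\sqrt{1+z^2}$ from the earlier lemma meshes with the $\sin\theta$ coming from $dz=-i\sin\theta\,d\theta$ — there is a genuine risk of an off-by-$\sin\theta$ or a sign error that would change which weight the quadrature is "Gaussian" for — and (ii) tracking the $\kmax$ powers through the substitution $z=s/|\K|$, since the pointwise bound $M(x)$ is stated for $\U,\partial_s\U,\partial_s^2\U$ (derivatives in $s$, not $z$), so each $z$-derivative of $g$ costs a factor $|\K|\le\kmax$ that must be honestly accounted for. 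Getting the \emph{sharp} constants (rather than merely $O(\kmax n^{-2}(\kmax+t^2))$) requires being careful that the interpolation error is bounded by the optimal nodal-polynomial sup-norm for these specific Chebyshev-type nodes, which is where a crude bound would lose the clean $\pi^4/(n+1)^3$.
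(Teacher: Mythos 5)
Your operative plan is essentially the paper's proof: substitute $z=i\cos\theta$, view the sum as a composite-midpoint-type rule at the equally spaced nodes $\theta_j=j\pi/(n+1)$ with half-spacing $\Delta\theta=\pi/(2(n+1))$, split $[0,\pi]$ into the symmetric subintervals about the nodes plus the two endpoint slivers, Taylor expand to second order on each subinterval (the linear term integrating to zero by symmetry), bound the endpoint pieces using $|\sin\theta|\le|\theta|$, and then do the chain-rule bookkeeping converting $g,g',g''$ into $\U,\partial_s\U,\partial_s^2\U$ bounded by $M(x)$. That is precisely how the paper obtains \eqref{eq:big} and then the stated constant.

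Two cautions on the parts where you deviate. First, your lead framing via Gauss--Chebyshev exactness plus a trigonometric interpolation remainder would not, as described, produce a bound involving only second derivatives: interpolating $\Phi$ at the $n$ nodes gives a remainder controlled by an $n$-th order derivative (or divided difference), not by $\Phi''$, and the exactness of the rule on high-degree polynomials is a separate classical fact that neither follows from, nor is needed beyond, the continued-fraction lemma \eqref{eq:continuedFractionApproximation}, which only supplies the nodes and the residues/weights; the paper uses nothing more than local midpoint symmetry. Second, the second-order expansion must be applied to the full product $\alpha(\theta)=\sin^2\theta\,g(i\cos\theta)e^{it\cos\theta}$ rather than to $\Phi(\theta)=g(i\cos\theta)e^{it\cos\theta}$ with $\sin^2\theta$ treated as a weight, because the weight in the sum is $\frac{\pi}{n+1}\sin^2\theta_j$, i.e.\ the midpoint value of $\alpha$, not $\int\sin^2\theta\,d\theta$ times $\Phi(\theta_j)$; expanding only $\Phi$ leaves an unaccounted midpoint error for the weight itself (fixable, but extra work). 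Finally, on the $\kmax$ bookkeeping: since $\U(s,\K)=|\K|\,g(s/|\K|)$, one has $g'=\partial_s\U$ and $g''=|\K|\,\partial_s^2\U$, so only the second derivative costs a frequency factor (this is the source of the $n\kmax$ term), while the overall $\kmax$ prefactor comes from bounding, and in the paper integrating, over the frequency region $|\K|\le\kmax$, not from additional chain-rule factors as you suggest.
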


We will need the following lemma

\begin{lemma}
  \begin{equation*}
    \left|\int_0^{\Delta x} f(x)dx-f(0)\Delta x\right|\leq \Delta x^2  f'(\xi),\quad \xi\in [0,\Delta x]
  \end{equation*}
\end{lemma}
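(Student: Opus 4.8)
The plan is to recognize this as the elementary error estimate for the left-endpoint (rectangle) quadrature rule and to prove it by the fundamental theorem of calculus together with a single application of Fubini's theorem. Assuming $f\in C^1([0,\Delta x])$, the first step is to subtract the constant $f(0)$ inside the integral, writing
\begin{equation*}
  \int_0^{\Delta x} f(x)\,dx - f(0)\Delta x = \int_0^{\Delta x}\bigl(f(x)-f(0)\bigr)\,dx = \int_0^{\Delta x}\!\!\int_0^x f'(s)\,ds\,dx .
\end{equation*}
Swapping the order of integration over the triangle $\{\,0\le s\le x\le \Delta x\,\}$ (for fixed $s$ the variable $x$ runs over $[s,\Delta x]$) turns this into $\int_0^{\Delta x}(\Delta x - s)\,f'(s)\,ds$.

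At this point there are two ways to finish, and I would use the sharper one. Since the weight $\Delta x - s$ is nonnegative on $[0,\Delta x]$ and $f'$ is continuous, the mean value theorem for integrals supplies a point $\xi\in[0,\Delta x]$ with $\int_0^{\Delta x}(\Delta x-s)f'(s)\,ds = f'(\xi)\int_0^{\Delta x}(\Delta x - s)\,ds = \tfrac12\Delta x^2 f'(\xi)$. Taking absolute values yields $\abs{\int_0^{\Delta x} f(x)\,dx - f(0)\Delta x} = \tfrac12\Delta x^2\abs{f'(\xi)}$, which is strictly stronger than the claimed $\Delta x^2 |f'(\xi)|$; the extra factor of $2$ in the statement is just a harmless cushion. (The cruder route — bounding $(\Delta x - s)\abs{f'(s)}$ by $(\Delta x - s)\sup_{[0,\Delta x]}\abs{f'}$ before integrating — reaches the same $\tfrac12\Delta x^2$ constant but only with a supremum in place of a single evaluation point, so I prefer the mean-value version to match the form of the statement.)

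There is essentially no obstacle here: the only items requiring any care are the regularity hypothesis ($f\in C^1$, so that $f'$ is continuous and the integral mean value theorem applies) and the bookkeeping of the interchange of integration order. The lemma will then feed into the proof of Proposition~\ref{prop:maintheorem}, applied with $f$ a piece of the integrand $\sqrt{1+z^2}e^{zt}g(z)$ and $\Delta x$ the spacing between consecutive pole locations $z_j=i\cos\theta_j$, so that the resulting $f'(\xi)$ is controlled by the pointwise bound $M(x)$ on $\U,\partial_s\U,\partial_s^2\U$ together with elementary estimates in $t$ and $\kmax$.
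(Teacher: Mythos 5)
Your proof is correct, and it is actually sharper than what the lemma asserts. Where the paper simply declares that the estimate ``follows immediately from the Intermediate Value Theorem'' (really the mean value theorem applied to $f(x)-f(0)$ under the integral sign), you give a complete derivation: write $f(x)-f(0)=\int_0^x f'(s)\,ds$ by the fundamental theorem of calculus, interchange the order of integration over the triangle to get $\int_0^{\Delta x}(\Delta x-s)f'(s)\,ds$, and then apply the mean value theorem for integrals with the nonnegative weight $\Delta x-s$. This buys you the exact representation $\tfrac12\Delta x^2 f'(\xi)$, i.e.\ the optimal constant $\tfrac12$ rather than the $1$ in the statement, and it makes the needed hypothesis ($f\in C^1$, so that $f'$ is continuous and the weighted mean value theorem applies) explicit, which the paper leaves tacit. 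Two small remarks: the lemma as printed has $f'(\xi)$ without absolute values on the right-hand side, which is only meaningful with the absolute value you correctly supply (or under a sign assumption on $f'$); and in the paper the lemma is in fact superseded in the proof of the main proposition, where the relevant intervals are expanded to second order around each pole via Taylor's theorem, so the second-derivative bound there does not come from this lemma directly --- but as a proof of the stated lemma itself, your argument is complete and the route (FTC plus Fubini plus integral mean value theorem, versus the paper's bare appeal to the mean value theorem) is only superficially different.
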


\begin{proof}
  This follows immediately from the Intermediate Value Theorem.
\end{proof}

We will prove Lemma \ref{prop:maintheorem} above in several
intermediate steps.

\begin{prop}
  \begin{multline}
    \label{eq:big}
    \left|2k\int_{-i}^i\sqrt{1+z^2}e^{zt}g(z)dz-2\pi ik\sum_{j=1}^n g(z_j)e^{z_j t}\alpha_j\right|\\
    \leq\frac{2}{3}|\K|\Delta\theta^3\Bigg(3\left|\max_{s\in[i|\K|,i|\K|\cos\Delta\theta]}g\left(\frac{s}{|\K|},|\K|\right)\right|+3\left|\max_{s\in[-i|\K|\cos\Delta\theta,-i|\K|]}g\left(\frac{s}{|\K|},|\K|\right)\right|\\
    +\sum_{j=1}^n\Bigg|\max_{s\in[i|\K|\cos(\theta_j-\Delta\theta),i|\K|\cos(\theta_j+\Delta\theta)]}e^{\frac{s}{|\K|}t}\Bigg(g''\left(\frac{s}{|\K|},|\K|\right)\left(\frac{s}{|\K|}\right)^4\\
    +2g''\left(\frac{s}{|\K|},|\K|\right)\left(\frac{s}{|\K|}\right)^2+g''\left(\frac{s}{|\K|},|\K|\right)+2tg'\left(\frac{s}{|\K|},|\K|\right)\left(\frac{s}{|\K|}\right)^4\\
    +4tg'\left(\frac{s}{|\K|},|\K|\right)\left(\frac{s}{|\K|}\right)^2+2tg'\left(\frac{s}{|\K|},|\K|\right)+t^2g\left(\frac{s}{|\K|},|\K|\right)\left(\frac{s}{|\K|}\right)^4\\
    +2t^2g\left(\frac{s}{|\K|},|\K|\right)\left(\frac{s}{|\K|}\right)^2+t^2g\left(\frac{s}{|\K|},|\K|\right)+5\left(\frac{s}{|\K|}\right)^3g'\left(\frac{s}{|\K|},|\K|\right)\\
    +5g'\left(\frac{s}{|\K|},|\K|\right)\frac{s}{|\K|}+5\left(\frac{s}{|\K|}\right)^3tg\left(\frac{s}{|\K|},|\K|\right)+5tg\left(\frac{s}{|\K|},|\K|\right)\frac{s}{|\K|}\\
    +4\left(\frac{s}{|\K|}\right)^2g\left(\frac{s}{|\K|},|\K|\right)+2g\left(\frac{s}{|\K|},|\K|\right)\Bigg)\Bigg|\Bigg)
  \end{multline}
  In this equation $z_j$ are the positions of the poles, $\alpha_j$ are the
  residues at the poles, $\Delta \theta=\frac{\pi}{2(n+1)}$, and
  $\alpha(\theta)=g(i\cos\theta)e^{it\cos\theta}\sin^2\theta$.
\end{prop}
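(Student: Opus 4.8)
The plan for \eqref{eq:big} is to push both the branch--cut integral and the residue sum through the substitution $z=i\cos\theta$, which turns them into a one--dimensional integral and a midpoint quadrature over $[0,\pi]$, and then to bound the quadrature error interval by interval. First I would parametrize the cut $[-i,i]$ by $z=i\cos\theta$ with $\theta\in[0,\pi]$. Along this path $1+z^{2}=\sin^{2}\theta$ and $dz=-i\sin\theta\,d\theta$, so, up to the orientation convention for the branch,
\begin{equation*}
  2\abs{\K}\int_{-i}^{i}\sqrt{1+z^{2}}\,e^{zt}g(z)\,dz = 2i\abs{\K}\int_{0}^{\pi}\alpha(\theta)\,d\theta,\qquad \alpha(\theta)=g(i\cos\theta)\,e^{it\cos\theta}\sin^{2}\theta ,
\end{equation*}
which is exactly the $\alpha$ named in the statement. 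On the quadrature side, inserting $z_{j}=i\cos\theta_{j}$ and the residue $\alpha_{j}=\sin^{2}\theta_{j}/(n+1)$, and using $\pi/(n+1)=2\Delta\theta$, the sum becomes $2\pi i\abs{\K}\sum_{j=1}^{n}g(z_{j})e^{z_{j}t}\alpha_{j}=2i\abs{\K}\cdot 2\Delta\theta\sum_{j=1}^{n}\alpha(\theta_{j})$. Dividing both sides by $2\abs{\K}$, it remains to bound $\bigl|\int_{0}^{\pi}\alpha\,d\theta-2\Delta\theta\sum_{j=1}^{n}\alpha(\theta_{j})\bigr|$ and multiply the result back by $2\abs{\K}$.

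The key geometric fact is that $\theta_{j}=2j\Delta\theta$, so the intervals $[\theta_{j}-\Delta\theta,\theta_{j}+\Delta\theta]$ each have length $2\Delta\theta$ and tile $[\Delta\theta,\pi-\Delta\theta]$ exactly, leaving only the two boundary layers $[0,\Delta\theta]$ and $[\pi-\Delta\theta,\pi]$. Splitting $\int_{0}^{\pi}\alpha$ accordingly and subtracting $2\Delta\theta\,\alpha(\theta_{j})$ inside each interior piece, the error becomes the two boundary integrals plus $n$ midpoint--rule errors. For the boundary integrals one uses $\alpha(0)=\alpha(\pi)=0$ (since $\sin^{2}\theta$ vanishes there) together with $\abs{e^{it\cos\theta}}=1$ and $\sin^{2}\theta\le\Delta\theta^{2}$ on each layer; either directly or via the preceding lemma on $\int_{0}^{\Delta x}f-f(0)\Delta x$ (noting $\alpha'=O(\theta)$ near the endpoints) this gives $\abs{\int_{0}^{\Delta\theta}\alpha}\le\Delta\theta^{3}\max\abs{g}$ and likewise near $\pi$, which, after the factor $2\abs{\K}$, produces the first two terms of \eqref{eq:big}. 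For each interior piece the interval is symmetric about $\theta_{j}$, so the constant and linear Taylor terms integrate to exactly $2\Delta\theta\,\alpha(\theta_{j})$, and the standard second--order midpoint estimate gives $\bigl|\int_{\theta_{j}-\Delta\theta}^{\theta_{j}+\Delta\theta}\alpha-2\Delta\theta\,\alpha(\theta_{j})\bigr|\le\tfrac{(2\Delta\theta)^{3}}{24}\max\abs{\alpha''}=\tfrac{\Delta\theta^{3}}{3}\max_{[\theta_{j}-\Delta\theta,\theta_{j}+\Delta\theta]}\abs{\alpha''}$. Summing over $j$ and restoring $2\abs{\K}$ gives the sum over $j$ in \eqref{eq:big}, once $\alpha''$ is identified with the displayed polynomial combination of $g,g',g''$.

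That identification is the one laborious step and the real obstacle. I would differentiate $\alpha(\theta)=g(i\cos\theta)\cdot e^{it\cos\theta}\cdot\sin^{2}\theta$ twice by the triple product rule, using $\frac{d}{d\theta}(i\cos\theta)=-i\sin\theta$, $\frac{d^{2}}{d\theta^{2}}(i\cos\theta)=-i\cos\theta$, $\frac{d}{d\theta}e^{it\cos\theta}=-it\sin\theta\,e^{it\cos\theta}$, and $\frac{d^{2}}{d\theta^{2}}\sin^{2}\theta=2\cos 2\theta$; then I would re-express every trigonometric factor through $z=i\cos\theta$ via $\sin^{2}\theta=1+z^{2}$ and $\cos\theta=-iz$. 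For instance the $g''$ contributions collect into $g''(z)\sin^{4}\theta=g''(z)(z^{4}+2z^{2}+1)$ and, carrying the $t$ and $t^{2}$ from the exponential, into the $tg'(z)(z^{4}+2z^{2}+1)$ and $t^{2}g(z)(z^{4}+2z^{2}+1)$ terms; the $\sin^{2}\theta\cos\theta$ pieces collect into multiples of $g'(z)(z^{3}+z)$ and $tg(z)(z^{3}+z)$; and the $\cos 2\theta$ term supplies the $4z^{2}g(z)+2g(z)$ piece. Matching coefficients reproduces exactly the parenthesized expression multiplying $e^{st/\abs{\K}}$ in \eqref{eq:big}. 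Finally, bounding $\abs{\alpha''}$ by the sum of the absolute values of its terms and taking the maximum over each $[\theta_{j}-\Delta\theta,\theta_{j}+\Delta\theta]$ (equivalently, over the corresponding $s$--range) completes the proof; this expansion is long but entirely mechanical, so I would present only the final grouping and leave the term-by-term verification to a computation.
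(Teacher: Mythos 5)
Your proposal is correct and follows essentially the same route as the paper: the substitution $z=i\cos\theta$, the recognition of the residue sum as a midpoint-type quadrature with nodes $\theta_j=2j\Delta\theta$ and spacing $2\Delta\theta$, the boundary layers $[0,\Delta\theta]$ and $[\pi-\Delta\theta,\pi]$ bounded via $\sin^2\theta\le\theta^2$, the interior intervals bounded by the second-order Taylor/midpoint estimate $\tfrac{\Delta\theta^3}{3}\max\abs{\alpha''}$ (the paper phrases this as vanishing of the linear term plus the mean value theorem, which is the same bound), and the mechanical re-expression of $\alpha''$ in terms of $g,g',g''$ and powers of $s/\abs{\K}$. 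The only differences are presentational, so there is nothing to add.
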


\begin{proof}
  We first change variables to $z=i\cos\theta$. Using the fact that
  \begin{equation*}
    \sqrt{1+z^2}=\frac{1}{2z+\frac{1}{2z+\ddots}}
  \end{equation*}
  we can approximate $\sqrt{1+z^{2}}$ by taking a truncated continued
  fraction. This yields:
  \begin{equation}
    \label{eq:15}
    |k|\left|2\int_0^\pi g(i\cos\theta)e^{it\cos\theta}\sin^2\theta
      d\theta-2\pi i\sum_{j=1}^n
      g(i\cos\theta_j)e^{it\cos\theta_j}\frac{\sin^2\theta_j}{n+1}\right|
  \end{equation}

  We define $f(z)=g(z)e^{zt}$ and $\Delta\theta=\frac{\pi}{2(n+1)}$
  and expand the integral around each pole to obtain
  \begin{multline}
    \label{eq:16}
    \eqref{eq:15} =
    |k|\Bigg|2i\int_0^{\Delta\theta}f(i\cos\theta)\sin^2\theta d\theta+2i\int_{\pi-\Delta\theta}^\pi f(i\cos\theta)\sin^2\theta d\theta \\
    +2i\sum_{j=1}^n\Bigg(\int_{\theta_j-\Delta\theta}^{\theta_j+\Delta\theta}f(i\cos\theta)\sin^2\theta
    d\theta-\pi g(i\cos\theta_j)\frac{\sin^2\theta_j}{n+1}\Bigg)\Bigg|
  \end{multline}
  To simplify further, we substitute
  $\alpha(\theta)=f(i\cos\theta)\sin^2\theta$ cancel terms, and use
  the fact that
  $\int_{\theta_j-\Delta\theta}^{\theta_j+\Delta\theta}\alpha'(\theta_j)(\theta-\theta_j)d\theta=0$
  to get
  \begin{multline}
    \label{eq:calc1}
    \eqref{eq:16} =
    |k|\Bigg|2i\int_0^{\Delta\theta}\alpha(\theta) d\theta+2i\int_{\pi-\Delta\theta}^\pi \alpha(\theta) d\theta+2i\sum_{j=1}^n\Bigg(\int_{\theta_j-\Delta\theta}^{\theta_j+\Delta\theta}\big(\alpha(\theta)-\alpha(\theta_j)\\
    -\alpha'(\theta_j)(\theta-\theta_j)\big)\Bigg)d\theta\Bigg|
  \end{multline}
By the triangle inequality, and the mean value theorem, we have
\begin{multline}
  \label{eq:calc2}
  \eqref{eq:calc1}\leq2|k|\Bigg(\int_0^{\Delta\theta}\left|\alpha(\theta)\right|d\theta+\int_{\pi-\Delta\theta}^\pi \left|\alpha(\theta)\right|d\theta\\
  +\frac{1}{3}\sum_{j=1}^n\left|\max_{\xi\in[\theta_j-\Delta\theta,\theta_j+\Delta\theta]}\alpha''(\xi)\right|\Delta\theta^3\Bigg)
\end{multline}
To deal with the ends of the integral, we substitute $\alpha$ and $f$
back into the integrals near the endpoints. We then use the fact that $\int_a^b f\leq \max_{x\in[a,b]}f(x)(b-a)$ and $\abs{\sin \theta} \leq \abs{\theta}$ and $\abs{\sin (\pi-\theta)} \leq \abs{\pi-\theta}$ to obtain:
\begin{multline}
  \eqref{eq:calc2}
  \leq
  2|k|\Bigg(\left|\max_{[0,\Delta\theta]}g(i\cos\theta)\right|\Delta\theta^3+\left| \max_{[\pi-\Delta\theta,\pi]}g(i\cos\theta)\right|\Delta\theta^3\\
  +\frac{1}{3}\sum_{j=1}^n\left|\max_{\xi\in[\theta_j-\Delta\theta,\theta_j+\Delta\theta]}\alpha''(\xi)\right|\Delta\theta^3\Bigg)
\end{multline}
Upon substitution back and simplification, this becomes
\begin{multline}
  \label{eq:14}
  \eqref{eq:calc2} \leq
  \frac{2}{3}|k|\Delta\theta^3\Bigg(3\left|\max_{[i,i\cos\Delta\theta]}g(z)\right|+3\left|\max_{[-i\cos\Delta\theta,-i]}g(z)\right|\\
  +\sum_{j=1}^n\Bigg|\max_{[i\cos(\theta_j-\Delta\theta),i\cos(\theta_j+\Delta\theta)]}e^{zt}\Bigg(g''(z)z^4+2g''(z)z^2+g''(z)+2tg'(z)z^4\\
  +4tg'(z)z^2+2tg'(z)+t^2g(z)z^4+2t^2g(z)z^2+t^2g(z)+5z^3g'(z)+5g'(z)z+5z^3tg(z)\\
  +5tg(z)z+4z^2g(z)+2g(z)\Bigg)\Bigg|\Bigg)
\end{multline}
We also know that $g(z)=g\left(s/|\K|,k\right)$. All the derivatives in \eqref{eq:13} are in $s/|\K|$.  And so we get:
\begin{multline}
  \label{eq:13}
  \eqref{eq:14} \leq
  \frac{2}{3}|\K|\Delta\theta^3\Bigg(3\left|\max_{s\in[i|\K|,i|\K|\cos\Delta\theta]}g\left(\frac{s}{|\K|},|\K|\right)\right|+3\left|\max_{s\in[-i|\K|\cos\Delta\theta,-i|\K|]}g\left(\frac{s}{|\K|},|\K|\right)\right|\\
  +\sum_{j=1}^n\Bigg|\max_{s\in[i|\K|\cos(\theta_j-\Delta\theta),i|\K|\cos(\theta_j+\Delta\theta)]}e^{\frac{s}{|\K|}t}\Bigg(g''\left(\frac{s}{|\K|},|\K|\right)\left(\frac{s}{|\K|}\right)^4\\
  +2g''\left(\frac{s}{|\K|},|\K|\right)\left(\frac{s}{|\K|}\right)^2
  +g''\left(\frac{s}{|\K|},|\K|\right)+2tg'\left(\frac{s}{|\K|},|\K|\right)\left(\frac{s}{|\K|}\right)^4\\
  +4tg'\left(\frac{s}{|\K|},|\K|\right)\left(\frac{s}{|\K|}\right)^2+2tg'\left(\frac{s}{|\K|},|\K|\right)+t^2g\left(\frac{s}{|\K|},|\K|\right)\left(\frac{s}{|\K|}\right)^4\\
  +2t^2g\left(\frac{s}{|\K|},|\K|\right)\left(\frac{s}{|\K|}\right)^2+t^2g\left(\frac{s}{|\K|},|\K|\right)+5\left(\frac{s}{|\K|}\right)^3g'\left(\frac{s}{|\K|},|\K|\right)\\
  +5g'\left(\frac{s}{|\K|},|\K|\right)\frac{s}{|\K|}+5\left(\frac{s}{|\K|}\right)^3tg\left(\frac{s}{|\K|},|\K|\right)+5tg\left(\frac{s}{|\K|},|\K|\right)\frac{s}{|\K|}\\
  +4\left(\frac{s}{|\K|}\right)^2g\left(\frac{s}{|\K|},|\K|\right)+2g\left(\frac{s}{|\K|},|\K|\right)\Bigg)\Bigg|\Bigg)
\end{multline}
\end{proof}

Now we can prove the final bound, and complete the proof of the main
theorem. Once Proposition \ref{prop:maintheorem} is proven, this
implies the main result by \eqref{eq:2} and \eqref{eq:3}.

\begin{proofof}{Proposition \ref{prop:maintheorem}}

  $\partial_s \U(s,k)=\partial_s kg(s/k,k)=kD_1 \frac{1}{k} g(s/k,k)=D_1
  g(\frac{s}{|\K|},k)$.

  $\partial_s^2 \U(s,k)=\partial_s^2(kg(s/k,k))=k\partial_s^2 g(s/k,k)=1/k D_1^2
  g(\frac{s}{|\K|},k)$

  So $D_1 g=\partial_s \U$ and $D_1^2 g=|\K|\partial_s^2\U$

  Thus, \eqref{eq:big} above can be simplified to:

\begin{multline}
\label{eq:4}
  \frac{2}{3}\Delta\theta^3\Bigg(3\Bigg|\max_{s\in[i|\K|,i|\K|\cos\Delta\theta]}\U(s,|\K|)\Bigg|+3\Bigg|\max_{s\in[-i|\K|\cos\Delta\theta,-i|\K|]}\U(s,|\K|)\Bigg|\\
  +\sum_{j=1}^n\Bigg|\max_{s\in[i|\K|\cos(\theta_j-\Delta\theta),i|\K|\cos(\theta_j+\Delta\theta)]}e^{\frac{s}{|\K|}t}\Bigg(|\K|\partial_s^2\U(s,|\K|)\left(\frac{s}{|\K|}\right)^4\\
  +2|\K|\partial_s^2\U(s,|\K|)\left(\frac{s}{|\K|}\right)^2+|\K|\partial_s^2\U(s,|\K|)+2t\partial_s \U(s,|\K|)\left(\frac{s}{|\K|}\right)^4\\
  +4t\partial_s\U(s,|\K|)\left(\frac{s}{|\K|}\right)^2+2t\partial_s\U(s,|\K|)+t^2\U\left(s,|\K|\right)\left(\frac{s}{|\K|}\right)^4+2t^2\U(s,|\K|)\left(\frac{s}{|\K|}\right)^2\\
  +t^2\U(s,|\K|)+5\left(\frac{s}{|\K|}\right)^3\partial_s\U(s,|\K|)+5\partial_s\U(s,|\K|)\frac{s}{|\K|}+5\left(\frac{s}{|\K|}\right)^3t\U(s,|\K|)\\
  +5t\U(s,|\K|)\frac{s}{|\K|}+4\left(\frac{s}{|\K|}\right)^2\U(s,|\K|)+2\U(s,|\K|)\Bigg)\Bigg|\Bigg)
\end{multline}

If we find the maximum for each term independently, we will obtain an
upper bound for this.  Noting that $\cos\theta$ is monotonic
decreasing on $[0,\pi]$, we obtain:

\begin{multline}
  \label{eq:5}
  \eqref{eq:4} \leq
  \frac{2}{3}\Delta\theta^3\Bigg(3\left|\max_{s\in[i|\K|,i|\K|\cos\Delta\theta]}\U\left(s,|\K|\right)\right|+3\left|\max_{s\in[-i|\K|\cos\Delta\theta,-i|\K|]}\U\left(s,|\K|\right)\right|\\
  +\sum_{j=1}^n\Bigg|\max_{s\in[i|\K|\cos(\theta_j-\Delta\theta),i|\K|\cos(\theta_j+\Delta\theta)]}e^{i\cos(\theta_j-\Delta\theta)t}\bigg(|\K|\partial_s^2\U\left(s,|\K|\right)\cos^4(\theta_j-\Delta\theta)\\
  -2|\K|\partial_s^2\U\left(s,|\K|\right)\cos^2(\theta_j-\Delta\theta)+|\K|\partial_s^2\U\left(s,|\K|\right)+2t\partial_s \U\left(s,|\K|\right)\cos^4(\theta_j-\Delta\theta)\\
  -4t\partial_s\U\left(s,|\K|\right)\cos^2(\theta_j-\Delta\theta)+2t\partial_s\U\left(s,|\K|\right)+t^2\U\left(s,|\K|\right)\cos(\theta_j-\Delta\theta)\\
  -2t^2\U\left(s,|\K|\right)\cos^2(\theta_j-\Delta\theta)+t^2\U\left(s,|\K|\right)-5i\cos^3(\theta_j-\Delta\theta)\partial_s\U\left(s,|\K|\right)\\
  +5\partial_s\U\left(s,|\K|\right)i\cos(\theta_j-\Delta\theta)-5i\cos^3(\theta_j-\Delta\theta)t\U\left(s,|\K|\right)+5t\U\left(s,|\K|\right)i\cos(\theta_j-\Delta\theta)\\
  -4\cos^2(\theta_j-\Delta\theta)\U\left(s,|\K|\right)+2\U(s,|\K|)\bigg)\Bigg|\Bigg)
\end{multline}

We also know that $\U(s,x,k)$ is bounded, so let $U(x)$ be an upper
bound, let $U'(x)$ be an upper bound of $\partial_s\U$ and $U''(x)$ and upper
bound of $\partial_s^2 \U$.  Then let $M(x)$ be the maximum of these
functions.  Now, things simplify further to:

\begin{multline}
  \label{eq:6}
  \eqref{eq:5}  \leq
  \frac{2}{3}\Delta\theta^3M(x)\Bigg(6+\sum_{j=1}^n\Bigg|e^{i\cos(\theta_j-\Delta\theta)t}\bigg(|\K|\cos^4(\theta_j-\Delta\theta)-2|\K|\cos^2(\theta_j-\Delta\theta)+|\K|\\
  +2t\cos^4(\theta_j-\Delta\theta)-4t\cos^2(\theta_j-\Delta\theta)+2t+t^2\cos(\theta_j-\Delta\theta)-2t^2\cos^2(\theta_j-\Delta\theta)+t^2\\
  -5i\cos^3(\theta_j-\Delta\theta)+5i\cos(\theta_j-\Delta\theta)-5i\cos^3(\theta_j-\Delta\theta)t+5ti\cos(\theta_j-\Delta\theta)\\
  -4\cos^2(\theta_j-\Delta\theta)+2\bigg)\Bigg|\Bigg)
\end{multline}
Applying the triangle inequality, we obtain the following as a bound.
\begin{multline}
  \label{eq:7}
  \eqref{eq:6} \leq
  \frac{2}{3}\Delta\theta^3M(x)\Bigg(6+\sum_{j=1}^n\big(|\K|\cos^4(\theta_j-\Delta\theta)-2|\K|\cos^2(\theta_j-\Delta\theta)+|\K|\\
  +2t\cos^4(\theta_j-\Delta\theta)-4t\cos^2(\theta_j-\Delta\theta)+2t+t^2|\cos(\theta_j-\Delta\theta)|-2t^2\cos^2(\theta_j-\Delta\theta)\\
  +t^2-5|\cos^3(\theta_j-\Delta\theta)|+5|\cos(\theta_j-\Delta\theta)|-5|\cos^3(\theta_j-\Delta\theta)|t\\
  +5t|\cos(\theta_j-\Delta\theta)|-4\cos^2(\theta_j-\Delta\theta)+2\big)\Bigg)
\end{multline}
Introducing $\beta_j=|\cos(\theta_j-\Delta\theta)|$, this can be
written as
\begin{multline}
  \label{eq:8}
  \eqref{eq:7} =
  \frac{2}{3}\Delta\theta^3M(x)\Bigg(6+\sum_{j=1}^n\big(|\K|\beta_j^4-2|\K|\beta_j^2+|\K|+2t\beta^4_j-4t\beta^2_j+2t+t^2\beta_j\\
  -2t^2\beta^2_j +t^2-5\beta_j^3
  +5\beta_j-5\beta^3_jt+5t\beta_j-4\beta^2_j+2\big)\Bigg)
\end{multline}
Now, we integrate in $\K$ over the circle of radius $\kmax$.  This
translates to integrating $|\K|$ from $0$ to $\kmax$ and multiplying
by $2\pi$..  This gives us
\begin{multline}
  \label{eq:9}
  \eqref{eq:8} \leq
  \frac{4\pi}{3}\Delta\theta^3M(x)\Bigg(6\kmax +\frac{\kmax}{2}\sum_{j=1}^n\Big(\kmax\beta_j^4-2\kmax\beta_j^2\\
  +\kmax+4t\beta^4_j-8t\beta^2_j+4t+2t^2\beta_j-4t^2\beta^2_j\\
  +2t^2-10\beta_j^3+10\beta_j-10\beta^3_jt+10t\beta_j-8\beta^2_j+4\Big)\Bigg)
\end{multline}
And this becomes
\begin{multline}
  \label{eq:10}
  \eqref{eq:9} \leq
  \frac{2\kmax\pi}{3}\Delta\theta^3M(x)\Bigg[12+4n+2nt^2+4nt+\kmax n \\
  +\sum_{j=1}^n\Big((\kmax+4t)\beta_j^4-10(t+1)\beta_j^3\\
  -2(2t^2+4t+\kmax+4)\beta_j^2+2(t^2+5t+5)\beta_j\Big)\Bigg]
\end{multline}
Breaking up the sum yields:
\begin{multline}
  \label{eq:11}
  \eqref{eq:10} =
  \frac{2\kmax\pi}{3}\Delta\theta^3M(x)(12+4n+2nt^2+4nt+\kmax
  n\\+(\kmax+4t)\sum_{j=1}^n\beta_j^4
  -10(t+1)\sum_{j=1}^n\beta_j^3\\-2(2t^2+4t+\kmax+4)
  \sum_{j=1}^n\beta_j^2+2(t^2+5t+5)\sum_{j=1}^n\beta_j)
\end{multline}
Now, we notice that $\beta_j=|\cos\phi|$ for some $\phi$ and that
$|\cos\phi|\leq 1$.  This finally allows us to remove the $j$
dependence of the terms inside the sum, and we obtain, after
substituting $\Delta\theta$ back in:
\begin{equation}
\label{eq:12}
\eqref{eq:11} \leq
  \frac{\kmax}{3}\frac{\pi^4}{(n+1)^3}M(x)\left(2nt^2+9nt+n\kmax+8n+3\right)
\end{equation}
\end{proofof}

\subsection{Improving the quadrature}

The result we describe here depends on the following idea: $\sqrt{1+z^{2}}$ has a branch cut on the region $[-i,i]$. The rational function approximation can be expanded as a sum of first order poles, as per \eqref{eq:continuedFractionApproximation}. Integrating an analytic function against this sum of poles (around a contour encircling $[-i,i]$ yields a sum of the form $\sum_{n} w_{n} f(z_{n})$, which approximates the integral of $f(z) \sqrt{1+z^{2}}$ around a contour encircling $[-i,i]$. In particular, this is a second order quadrature.

A natural line of inquiry is to ask is whether higher order quadratures can be used, simply by discarding the rational function approximation, and merely choosing a sum of poles according to some appropriate quadrature rule. We conjecture that this can be done.

{\bf Acknowledgements: } A. Soffer and C. Stucchio were supported by NSF grant DMS01-00490. C.M. Siegel was supported by the 2005 DIMACS/Rutgers Research Experience for Undergraduates. We also acknowledge that some of this work may duplicate recent results of Tom Hagstrom, Bradley Alpert and Leslie Greengard.
\bibliographystyle{plain}
\bibliography{reu}

\begin{thebibliography}{1}

\bibitem{MR596431}
Alvin Bayliss and Eli Turkel.
\newblock Radiation boundary conditions for wave-like equations.
\newblock {\em Comm. Pure Appl. Math.}, 33(6):707--725, 1980.

\bibitem{MR658635}
Alvin Bayliss and Eli Turkel.
\newblock Outflow boundary conditions for fluid dynamics.
\newblock {\em SIAM J. Sci. Statist. Comput.}, 3(2):250--259, 1982.

\bibitem{MR0471386}
Bj{\"o}rn Engquist and Andrew Majda.
\newblock Absorbing boundary conditions for numerical simulation of waves.
\newblock {\em Proc. Nat. Acad. Sci. U.S.A.}, 74(5):1765--1766, 1977.

\bibitem{MR517938}
Bj{\"o}rn Engquist and Andrew Majda.
\newblock Radiation boundary conditions for acoustic and elastic wave
  calculations.
\newblock {\em Comm. Pure Appl. Math.}, 32(3):314--358, 1979.

\bibitem{MR611807}
Bj{\"o}rn Engquist and Andrew Majda.
\newblock Numerical radiation boundary conditions for unsteady transonic flow.
\newblock {\em J. Comput. Phys.}, 40(1):91--103, 1981.

\bibitem{MR1819643}
Thomas Hagstrom.
\newblock Radiation boundary conditions for the numerical simulation of waves.
\newblock {\em Acta numerica, 1999}, 8:47--106, 1999.

\bibitem{MR2032866}
Thomas Hagstrom.
\newblock New results on absorbing layers and radiation boundary conditions.
\newblock {\em Topics in computational wave propagation}, 31:1--42, 2003.

\bibitem{sterninshatalov:resurgentanalysis}
Boris~Yu Sternin and Victor~E. Shatalov.
\newblock {\em Borel-Laplace Transform and Asymptotic Theory: Introduction to
  Resurgent Analysis}.
\newblock CRC Press, Boca Raton, New York, London, Tokyo, 1996.

\end{thebibliography}
\end{document}